\def\N{\mathbb N}
\def\1{\mathbbm 1}
\newcommand{\interior}[1]{\raise0.2ex\hbox{$\displaystyle{\mathop{#1}^{\circ}}$}}
\newtheorem{theorem}{Theorem}[section]
\newtheorem*{theorem*}{Theorem}
\newtheorem{definition}[theorem]{Definition}
\newtheorem{lemma}[theorem]{Lemma}
\numberwithin{equation}{section}
\theoremstyle{remark}
\newtheorem*{remark*}{Remark}
\theoremstyle{remark}
\long\def\symbolfootnote[#1]#2{\begingroup%
\def\thefootnote{\fnsymbol{footnote}}\footnote[#1]{#2}\endgroup}
\begin{document}

\title{Boundary Traces of Holomorphic Functions on the Unit Ball in $\mathbb{C}^n$}
\author{William Gryc, Muhlenberg College}

\maketitle

\section{Abstract}
It is a classical theorem that if a function is integrable along the boundary of the unit circle, then the function is the nontangential limit of a holomorphic function on the open disc if and only if its Fourier coefficients for nonnegative integers are zero. In this article we generalize this result to higher complex dimensions by proving that for an integrable function on the unit sphere, it is ``boundary trace'' of a holomorphic function on the open unit ball if and only if two particular families of integral equations are satisfied. To do this, we use the theory of Hardy spaces as well as the invariant Poisson and Cauchy integrals.

\section{Introduction}
Holomorphic functions are \textit{special}. This is the major theme of an undergraduate course in complex analysis. Whereas the stars in a real analysis class are the counterexamples and the theorems you thought should be true but are actually false (in full generality at least), the stars in a complex analysis class are facts you would never dream to be true but are, such as the Cauchy Integral formula, Liouville's Theorem, and Hadamard's Three Lines Theorem. Now you can make an argument that lots of families of functions are special, like smooth functions and harmonic functions. But those functions are in many ways ubiquitous and holomorphic functions are much more rare. For example, most integrable functions on a reasonable domain can be approximated by smooth functions, but not by holomorphic functions. If an $L^1$ function saw a smooth function at the supermarket they wouldn't be surprised, but if they saw a holomorphic function they would be tempted to ask it for a selfie. In the same vein, most integrable functions along a boundary of some reasonable subset of $\mathbb{R}^{n}$ are the boundary values of a harmonic function. Only a select few of those integrable functions are the boundary values of a holomorphic function (in the case $n$ is even). This paper is about this latter topic: what does it take for an integrable function on a boundary to be a boundary value of a holomorphic function when the ``reasonable subset'' is the unit ball of $\mathbb{C}^n$? (Admittedly, the unit ball is about as nice of a subset you can find, but it will still lead to an interesting problem.)

Let's make the question above more formal. Fix $n\in \mathbb{N}$. Let $B$ denote the open unit ball and $S$ denote the unit sphere in $\mathbb{C}^n$. That is, for $z=(z_1,\ldots,z_n)\in \mathbb{C}^n$, we have $z\in B$ if and only if $|z_1|^2+\ldots+|z_n|^2<1$ and $z\in S$ if and only if $|z_1|^2+\ldots+|z_n|^2=1$. We say that a function $F:B\to\mathbb{C}$ is \emph{holomorphic} if $F$ is continuous and is holomorphic in each of its $n$ complex variables separately. As we noted above, we are going to look at integrable functions on $S$. This is a large space of functions that will allow us to consider integral formulas for our boundary functions. Thus, we will consider functions in $L^p(S,\sigma)$, where $1\leq p <\infty$, $\sigma$ is the standard surface measure of the sphere $S$ normalized so that $\sigma(S)=1$, and
\[L^p(S,\sigma)=\left\{f:S\to\mathbb{C}\left| \int_{S} |f(\zeta)|^p d\sigma(\zeta) < \infty\right.\right\}.\]
Furthermore, we have the norm $\|f\|_p:=(\int_{S} |f(\zeta)|^p d\sigma(\zeta))^{1/p}$ for $f\in L^p(S,\sigma)$ under which $L^p(S,\sigma)$ is complete.

Next we address what it means for $f$ to be ``the boundary value of a function $F$ that is holomorphic on $B$.'' Let $A(S)$ denote the set of functions on $S$ that are restrictions to $S$ of functions $F$ that are continuous on the closure of $B$ and holomorphic on $B$. While $A(S)$ is naturally a set of ``boundary traces'' of holomorphic functions on $B$ that also lie in $L^p(S,\sigma)$ for all $p\geq 1$, it turns out that we can expand it: for $1\leq p<\infty$ let $H^p(S)$ denote the closure of $A(S)$ in $L^p(S,\sigma)$. While we are mainly interested in the case that $1\leq p<\infty$, $H^p(S)$ can also be defined in this way for $0<p<1$. Theorem \ref{theorem-1} below will show that $H^p(S)$ can be thought of boundary traces of holomorphic functions. Before we can state this theorem, we need to define the Hardy spaces $H^p(B)$ of holomorphic functions on $B$:

\begin{definition}
Let $0<p<\infty$. Define the Hardy space $H^p(B)$ as the set of all holomorphic functions $F$ on $B$ such that 
\[\sup_{0\leq r < 1} \left(\int_{S} |F(r\zeta)|^p d\sigma(\zeta) \right)^{1/p}<\infty.\]
\end{definition}
If $p\geq 1$, the supremum above can be used as a norm on $H^p(B)$ and under this norm $H^p(B)$ is a Banach space (see \cite{Rudin1980}). It turns out that every element of $H^p(B)$ admits a special type of limit for almost every point on the boundary $S$. Kor\'{a}nyi first defined these limits and called them ``admissible limits'' in \cite{Koranyi1969} but we will follow \cite{Rudin1980} and call them $K$-limits. For this paper the detailed definition of the $K$-limit will not be needed, but we note that it is a limit to points on $S$ taken within $B$ but with certain trajectories disallowed; in the case that $n=1$, these limits are usually called \emph{nontangential limits} but this nomenclature is inaccurate for $n>1$ as some tangential trajectories are allowed. For the details of $K$-limits we refer the interested reader to \cite{Koranyi1969} or Chapter 5 in \cite{Rudin1980}. In any case, it turns out that all elements of $H^p(B)$ have $K$-limits that lie in $H^p(S)$ (including the case where $0<p<1$):
\begin{theorem}\label{theorem-1}
Let $0 < p <\infty$. For a function $F$ with domain $B$ and $\zeta\in S$, let $F^*(\zeta)$ denote the $K$-limit of $F$ at $\zeta$, if that limit exists.
\begin{enumerate}[(a)]
\item For any $F\in H^p(B)$, the $K$-limit $F^*$ exists almost everywhere with respect to $\sigma$ on $S$ and $F^*\in H^p(S)$. Furthermore, the mapping $F\mapsto F^*$ is an isometry.
\item If $p\geq 1$ and $f\in H^p(S)$, then $C[f]\in H^p(B)$, $C[f]=P[f]$, and $C[f]^*=f$.
\end{enumerate}
\end{theorem}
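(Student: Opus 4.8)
The plan is to reduce everything to three classical ingredients: the subharmonicity of $|F|^p$ for holomorphic $F$, the rotation-invariance of $\sigma$ together with the one-variable $H^p$ theory on the disc, and the already-cited fact that $K$-limits of $H^p(B)$ functions exist almost everywhere. For part (a), the first step is to observe that $\log|F|$ is plurisubharmonic, so $|F|^p$ is subharmonic on $B$ for every $p>0$; the sub-mean-value inequality over concentric spheres then makes $r\mapsto\int_S|F(r\zeta)|^p\,d\sigma(\zeta)$ nondecreasing on $[0,1)$, whence $\|F\|_{H^p(B)}^p=\lim_{r\to1^-}\int_S|F(r\zeta)|^p\,d\sigma(\zeta)$ and the dilates $F_r:=F(r\,\cdot)$ form a bounded family in $L^p(S,\sigma)$. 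Since $F_r$ extends holomorphically to $\{\,z:|z|<1/r\,\}\supset\overline{B}$, its boundary values $F_r|_S$ lie in $A(S)$; so, once we prove that $F^*$ exists $\sigma$-a.e.\ and that $F_r\to F^*$ in $L^p(S,\sigma)$, it will follow that $F^*\in\overline{A(S)}=H^p(S)$ and that $\|F^*\|_p=\lim_{r\to1^-}\|F_r\|_p=\|F\|_{H^p(B)}$, i.e.\ the asserted isometry. Thus all of part (a) comes down to the a.e.\ existence of $F^*$ and the $L^p$-convergence of the dilates.

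For those two facts I would pass to slice functions. For $\zeta\in S$ put $F_\zeta(\lambda):=F(\lambda\zeta)$, which is holomorphic on the unit disc, so $|F_\zeta|^p$ is subharmonic there. The rotation-invariance of $\sigma$ gives the slice formula $\int_S g\,d\sigma=\int_S\int_0^{2\pi} g(e^{i\theta}\zeta)\,\frac{d\theta}{2\pi}\,d\sigma(\zeta)$; applying it to $g=|F_r|^p$ and using monotone convergence in $r$ yields $\int_S\|F_\zeta\|_{H^p(\mathbb{D})}^p\,d\sigma(\zeta)=\|F\|_{H^p(B)}^p<\infty$, so $F_\zeta\in H^p(\mathbb{D})$ for $\sigma$-a.e.\ $\zeta$. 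The classical one-variable theory then applies on each slice: $F_\zeta$ has radial limits $F_\zeta^*$ a.e.\ on $\partial\mathbb{D}$, $\|F_\zeta\|_{H^p(\mathbb{D})}=\|F_\zeta^*\|_{L^p(\partial\mathbb{D})}$, and $F_\zeta(r\,\cdot)\to F_\zeta^*$ in $L^p(\partial\mathbb{D})$. Feeding this back through the slice formula---with Fubini, and a dominating function supplied by $\zeta\mapsto\|F_\zeta\|_{H^p(\mathbb{D})}^p$---shows that the radial limits of $F$ exist $\sigma$-a.e.\ on $S$, that they define a function $\varphi\in L^p(S,\sigma)$ with $\|\varphi\|_p=\|F\|_{H^p(B)}$, and that $F_r\to\varphi$ in $L^p(S,\sigma)$. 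Finally, invoking the cited theorem that the $K$-limit $F^*$ exists $\sigma$-a.e., and noting that a radial approach is an admissible one, $F^*=\varphi$ almost everywhere; this completes part (a). (For $p\ge1$ one could instead argue through weak-$*$ compactness of $\{F_r\,d\sigma\}$ and the F.\ and M.\ Riesz theorem on $S$ to write $F=P[f]$ with $f\in L^1(S,\sigma)$, but the slice route handles all $p$ at once.)

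For part (b) the strategy is to approximate and pass to the limit. Given $p\ge1$ and $f\in H^p(S)=\overline{A(S)}$, pick $g_k\in A(S)$ with $g_k\to f$ in $L^p$, and write $G_k$ for the holomorphic extension of $g_k$ to $B$. Density of holomorphic polynomials in the ball algebra gives $C[g_k]=P[g_k]=G_k$ pointwise on $B$ and $C[g_k]^*=g_k$, while part (a) gives $\|G_k\|_{H^p(B)}=\|g_k\|_p$; hence $g\mapsto C[g]$ is a $\|\cdot\|_p$-isometry of $A(S)$ into the Banach space $H^p(B)$ and extends to an isometry $\Phi\colon H^p(S)\to H^p(B)$. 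Because $L^p(S,\sigma)\subset L^1(S,\sigma)$ and, for each fixed $z\in B$, the Cauchy and invariant Poisson kernels are bounded functions of $\zeta\in S$, we get $C[g_k](z)\to C[f](z)$ and $P[g_k](z)\to P[f](z)$ for every $z\in B$; comparing these with the locally uniform convergence $C[g_k]=\Phi[g_k]\to\Phi[f]$ (a consequence of norm convergence in $H^p(B)$) identifies $C[f]=P[f]=\Phi[f]\in H^p(B)$. Applying the boundary-trace isometry of part (a) to $\Phi[g_k]\to\Phi[f]$ gives $C[g_k]^*\to C[f]^*$ in $L^p(S,\sigma)$, and since $C[g_k]^*=g_k\to f$ we conclude $C[f]^*=f$.

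The genuine obstacle lies inside part (a) and is really a question of which classical theorems to invoke. The a.e.\ existence of $F^*$ rests on the admissible-maximal-function theory developed in \cite{Koranyi1969} and \cite{Rudin1980}, which is substantial and which I would quote rather than reprove; and the isometry ultimately rides on the one-variable $H^p$ theory for $0<p\le1$, whose engine is inner-outer factorization and the F.\ and M.\ Riesz theorem---precisely where ``holomorphic'' does work that ``harmonic'' cannot, since the invariant Poisson integral of a singular measure is $\mathcal{M}$-harmonic yet has boundary trace $0$. Everything else---monotonicity of the slice norms, the bookkeeping with the slice formula and Fubini, the kernel bounds and the limiting argument of part (b)---is routine once those two inputs are in hand.
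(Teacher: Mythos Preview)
The paper does not prove this theorem; immediately after stating it, the author writes ``For a proof, see \cite{Rudin1980} where the above theorem is stated as Theorem 5.6.8'' and uses it thereafter as a black box. So there is no in-paper argument to compare against, and nothing in the manuscript asks for a proof here.

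That said, your outline is a sound reconstruction of the standard argument. The slice-function reduction correctly transfers the one-variable $H^p$ theory to $S$: the identity $\int_S\|F_\zeta\|_{H^p(\mathbb{D})}^p\,d\sigma(\zeta)=\|F\|_{H^p(B)}^p$ follows from rotation invariance plus monotone convergence, and the dominated-convergence step for $\int_S\|F_\zeta(r\cdot)-F_\zeta^*\|_{L^p(\partial\mathbb{D})}^p\,d\sigma(\zeta)\to 0$ is legitimate with dominating function a constant times $\|F_\zeta\|_{H^p(\mathbb{D})}^p$. You are also right to flag the a.e.\ existence of $K$-limits as the one genuinely deep input, resting on the admissible maximal function machinery of \cite{Koranyi1969} and Chapter~5 of \cite{Rudin1980}; quoting it rather than reproving it is exactly what Rudin's own Theorem~5.6.8 does internally. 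In part~(b), your approximation argument is fine; the only step worth a word of justification is $C[g]=G$ for $g\in A(S)$ (the Cauchy--Szeg\H{o} reproducing property on the ball algebra), and your appeal to polynomial density handles that. In short: your proof is correct and is close in spirit to Rudin's, but the paper itself is content to cite the result without argument.
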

For a proof, see \cite{Rudin1980} where the above theorem is stated as Theorem 5.6.8. In Theorem \ref{theorem-1}, we note that $P[f]$ is the (invariant) Poisson integral of $f$ and $C[f]$ is the Cauchy integral of $f$, and we will give detailed definitions of these integrals below. At the moment, though, we note that part (b) of Theorem \ref{theorem-1} implies that the elements of $H^p(S)$ are $K$-limits of holomorphic functions on $B$ and thus $H^p(S)$ is an appropriate choice to serve as our set of boundary traces of holomorphic functions when $p\geq 1$. Indeed, in light of Theorem \ref{theorem-1} we can now restate our original question rigorously:\\

\noindent\textit{Fix $1\leq p<\infty$ and suppose $f\in L^p(S,\sigma)$. We want to find necessary and sufficient integral formulas for $f$ to lie in $H^p(S)$.\\}

Classical Hardy space theory provides an answer in the case that $n=1$. For this case, let $D$ denote the unit disc and $U$ the unit circle in $\mathbb{C}$. We have the following result:

\begin{theorem}\label{theorem-2}
Let $1\leq p<\infty$ and suppose $f\in L^p(U,\sigma)$. Then $f\in H^p(U)$ if and only if for every $j\in\N$ we have
\[\int_{U} \zeta^jf(\zeta)d\sigma(\zeta) = 0.\]
\end{theorem}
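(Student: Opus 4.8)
The plan is to prove the two implications separately, exploiting the facts, special to the disc, that $H^p(U)$ is by definition the $L^p$-closure of $A(U)$ and that membership in $A(U)$ can be detected through Taylor expansions. Throughout I will use that $\sigma$ is a probability measure on $U$ and that $|\zeta|\equiv 1$ on $U$.

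For the necessity direction, I would first observe that the monomials $\zeta\mapsto\zeta^k$, $k\ge 0$, are dense in $A(U)$ for the supremum norm: if $F$ is continuous on $\overline{D}$ and holomorphic on $D$, the dilates $F_\rho(\zeta):=F(\rho\zeta)$ converge to $F$ uniformly on $\overline{D}$ as $\rho\to 1^-$, and each $F_\rho$ is the uniform limit on $\overline{D}$ of its own Taylor polynomials. Hence the polynomials are dense in $A(U)$ in $L^p$-norm, and so dense in $H^p(U)$. On the other hand, each functional $f\mapsto\int_U\zeta^j f\,d\sigma$ is bounded on $L^1(U,\sigma)$, hence on $L^p(U,\sigma)$ for $p\ge 1$, and it kills every monomial $\zeta^k$ with $k\ge 0$, since $\int_U\zeta^{j+k}\,d\sigma=0$ once $j\ge 1$. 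Passing to the closure, these functionals vanish on all of $H^p(U)$, which is the asserted necessity.

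For the sufficiency direction, suppose $\int_U\zeta^j f\,d\sigma=0$ for every $j\in\N$. Setting $\widehat f(k):=\int_U f(\zeta)\overline\zeta^{\,k}\,d\sigma(\zeta)$, the hypothesis says exactly that $\widehat f(k)=0$ for all $k<0$. I would then form the Poisson integral $F:=P[f]$, which on the disc coincides with the Cauchy integral $C[f]$, and note that, because the negative Fourier coefficients are absent, its series collapses to the power series $F(z)=\sum_{k\ge 0}\widehat f(k)\,z^k$; since $|\widehat f(k)|\le\|f\|_1$, this series has radius of convergence at least $1$, so $F$ is holomorphic on $D$. For each fixed $r\in(0,1)$ the dilate $F_r(\zeta):=F(r\zeta)=\sum_{k\ge 0}\widehat f(k)r^k\zeta^k$ converges uniformly on $\overline{D}$ by the Weierstrass test, so $F_r$ is continuous on $\overline{D}$ and holomorphic on $D$, i.e.\ $F_r\in A(U)$. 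Finally I would invoke the classical fact that the Poisson integral of an $L^p$ function converges to it in $L^p$-norm when $1\le p<\infty$, giving $F_r\to f$ in $L^p(U,\sigma)$ as $r\to 1^-$; thus $f$ lies in the $L^p$-closure of $A(U)$, which is $H^p(U)$.

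The routine part is the necessity; the content lies in the sufficiency step, and the point I expect to be the crux is the recognition that the vanishing of the negative Fourier coefficients is precisely what promotes the (merely harmonic) Poisson integral to a genuine holomorphic function, together with the standard — but not purely formal — input that Poisson integrals of $L^p$ functions converge in $L^p$. It is worth noting that Theorem~\ref{theorem-1} gives no shortcut here: that theorem presupposes a function already known to lie in $H^p(S)$, whereas the whole task is to build the holomorphic function out of the integral identities. For $p=1$ the argument above is entirely elementary; for general $p\ge 1$ the only extra care needed is the $L^p$ (rather than uniform) convergence of the dilates and the $L^p$-boundedness of the test functionals, both of which are standard.
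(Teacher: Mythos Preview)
Your proof is correct. Note first that the paper does not itself prove Theorem~\ref{theorem-2}; it is quoted as a classical result and attributed to \cite{Duren1970}. That said, your argument is essentially the $n=1$ specialization of the paper's proof of Theorem~\ref{theorem-main}. For sufficiency, both show that the integral hypotheses force the Poisson integral $P[f]$ to be holomorphic (equivalently $P[f]=C[f]$) and then use the $L^p$-convergence $P[f]_r\to f$ from Theorem~\ref{theorem-3}(a); your observation that each $F_r\in A(U)$ and $F_r\to f$ in $L^p$ is exactly Lemma~\ref{lemma-1} unwound. For necessity, your density-of-polynomials argument is a slight variant of the paper's method, which instead expands $F=P[f]$ as a power series on the sphere of radius $r$, integrates term by term against $\zeta^\alpha\overline{\zeta}^\beta$ using the orthogonality of Theorem~\ref{theorem-integral}, and then lets $r\to 1^-$; the two are equivalent in spirit, yours being marginally more direct in dimension one.

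One sentence to tighten: you write that $P[f]$ ``on the disc coincides with the Cauchy integral $C[f]$'' \emph{before} invoking the vanishing of the negative Fourier coefficients. That coincidence is not a general fact about $L^p$ functions; it is precisely the consequence of the hypothesis $\widehat f(k)=0$ for $k<0$. Reorder the clause so the hypothesis is stated first and the equality $P[f]=C[f]$ is drawn as its consequence.
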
 
Put another way, $f\in H^p(U)$ if and only if its Fourier coefficients for negative integers are all 0. This result is classical and can be found, for example, as Theorem 3.4 in \cite{Duren1970}. (In fact, more can be said on the other Fourier coefficients: let $F\in H^p(D)$ with $F^*=f$. As $F$ is holomorphic, is has a power series representation centered at $0$: $F(z) = \sum_{j=0}^\infty a_j z^j$. For nonnegative indices $j$, the $j^{th}$ Fourier coefficient of $f$ is equal to $a_j$.) The goal of this paper is to generalize Theorem \ref{theorem-2} for $n>1$.

Before we state our generalization, we state some notation for elements of $\mathbb{C}^n$: first if $x+iy$ is a complex number with $x$ and $y$ real, then $\overline{x+iy}=x-iy$ is its complex conjugate. For $z=(z_1,z_2,\ldots,z_n)\in\mathbb{C}^n$, then $\overline{z}=(\overline{z}_1, \overline{z}_2,\ldots, \overline{z}_n)$. We also need to be able to take ``powers'' of elements of $\mathbb{C}^n$. To that end, we define a \emph{multi-index} $\alpha=(\alpha_1,\ldots,\alpha_n)$ to be any element of $(\mathbb{N}\cup\{0\})^n$. Then for $z=(z_1,z_2,\ldots,z_n)\in\mathbb{C}^n$ and multi-index $\alpha=(\alpha_1,\alpha_2,\ldots,\alpha_n)$, we define
\[z^\alpha = z_1^{\alpha_1} z_2^{\alpha_2}\cdot\ldots\cdot z_n^{\alpha_n}.\]
Note that $z^\alpha$ is holomorphic. Furthermore, we define $|\alpha|$ and $\alpha!$ as
\begin{equation*}
|\alpha| = \alpha_1+\alpha_2+\ldots+\alpha_n,\, \alpha! = \alpha_1!\alpha_2!\cdot\ldots\cdot\alpha_n!\,. 
\end{equation*}
Finally, for any multi-index $\omega\in(\mathbb{N}\cup\{0\})^n$ we define the constant $c_{\omega}$ as
\begin{equation}
c_{\omega} := \frac{(n-1)!\omega!}{(n-1+|\omega|)!}.
\end{equation}
One reason the constants $c_{\omega}$ are significant is due to the following result (found as Propositions 1.4.8 and 1.4.9 in \cite{Rudin1980}):
\begin{theorem}\label{theorem-integral}
Let $\omega,\upsilon\in (\mathbb{N}\cup\{0\})^n$ be multi-indices. Then
\begin{equation}\label{integralEq1}
\int_{S} \zeta^\omega\overline{\zeta}^\upsilon d\sigma(\zeta)=\left\{\begin{array}{lr} 0 & \mbox{if $\omega\neq\upsilon$}\\
c_{\omega} & \mbox{if $\omega=\upsilon$}\end{array}\right..
\end{equation}
\end{theorem}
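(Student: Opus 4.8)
The plan is to handle the two cases by entirely separate mechanisms: the vanishing when $\omega\neq\upsilon$ will follow from a rotational symmetry of $\sigma$, while the value $c_\omega$ when $\omega=\upsilon$ will follow from comparing an integral over $S$ with an integral over all of $\C^n$.

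For the first case I would use that the map $T_\theta:\zeta=(\zeta_1,\ldots,\zeta_n)\mapsto(e^{i\theta_1}\zeta_1,\ldots,e^{i\theta_n}\zeta_n)$, for $\theta=(\theta_1,\ldots,\theta_n)\in\R^n$, is a rotation of $\R^{2n}$ and hence carries $S$ to $S$ and preserves $\sigma$. Substituting $\zeta\mapsto T_\theta\zeta$ in the integral gives
\[\int_S \zeta^\omega\overline{\zeta}^\upsilon\,d\sigma(\zeta)=e^{i\sum_{k=1}^n \theta_k(\omega_k-\upsilon_k)}\int_S \zeta^\omega\overline{\zeta}^\upsilon\,d\sigma(\zeta)\]
for every $\theta\in\R^n$. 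If $\omega\neq\upsilon$ there is an index $k$ with $\omega_k\neq\upsilon_k$, so one can choose $\theta$ making the exponential factor different from $1$; this forces the integral to equal $0$.

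For the second case, set $I_\omega:=\int_S |\zeta^\omega|^2\,d\sigma(\zeta)=\int_S |\zeta_1|^{2\omega_1}\cdots|\zeta_n|^{2\omega_n}\,d\sigma(\zeta)$ and evaluate $\int_{\C^n}|z^\omega|^2 e^{-|z|^2}\,dV(z)$ in two ways, where $V$ denotes Lebesgue measure on $\C^n\cong\R^{2n}$. On one hand, Fubini together with the one-variable identity $\int_{\C}|w|^{2m}e^{-|w|^2}\,dA(w)=\pi\,m!$, with $A$ Lebesgue measure on $\C$ (a short polar-coordinate computation), gives $\pi^n\,\omega!$. On the other hand, writing $z=r\zeta$ with $r=|z|\ge 0$ and $\zeta\in S$, one has $dV(z)=r^{2n-1}\,dr\,ds(\zeta)$, where $s$ is the \emph{unnormalized} surface measure on $S$; so the integral factors as
\[\left(\int_0^\infty r^{2|\omega|+2n-1}e^{-r^2}\,dr\right)\left(\int_S |\zeta^\omega|^2\,ds(\zeta)\right)=\tfrac12\,(|\omega|+n-1)!\;s(S)\;I_\omega,\]
using $\int_0^\infty r^{2k+1}e^{-r^2}\,dr=\tfrac12\,k!$. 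Equating the two evaluations gives $I_\omega=2\pi^n\,\omega!\big/\big((|\omega|+n-1)!\,s(S)\big)$. Taking the zero multi-index $\omega=0$ in this identity, where $I_0=1$ because $\sigma(S)=1$, pins down $s(S)=2\pi^n/(n-1)!$ with no external input; substituting this back produces $I_\omega=(n-1)!\,\omega!/(|\omega|+n-1)!=c_\omega$.

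No step here is conceptually hard; the only part demanding care is the bookkeeping around normalizations — keeping $\sigma$ (total mass $1$) and $s$ (total mass $2\pi^n/(n-1)!$) straight, and handling the radial Jacobian $r^{2n-1}$ and the Gamma-function evaluations correctly. An alternative to the Gaussian comparison is an induction on $n$ that integrates $S\subset\C^n$ over its last coordinate and reduces $I_\omega$ to a Beta integral; I would favor the argument above since it is self-contained, requiring not even the surface area of $S$ as a known input.
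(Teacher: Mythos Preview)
Your argument is correct in both parts. Note, however, that the paper does not supply its own proof of this statement: it simply cites Propositions~1.4.8 and~1.4.9 of Rudin's \emph{Function Theory on the Unit Ball of $\mathbb{C}^n$}. Your proof is essentially the one Rudin gives there --- the torus-rotation symmetry for the orthogonality, and the two-way evaluation of the Gaussian integral $\int_{\mathbb{C}^n}|z^\omega|^2 e^{-|z|^2}\,dV$ for the value $c_\omega$ --- so there is nothing to compare beyond observing that you have reproduced the cited source.
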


With this notation in place, we can state the main result of the paper:

\begin{theorem}\label{theorem-main}
Suppose $1\leq p<\infty$ and $f\in L^p(S,\sigma)$. Then $f\in H^p(S)$ if and only if for any pair of multi-indices $\alpha$ and $\beta$ we have the following: \\
\noindent (a) If $\alpha_j>\beta_j$ for some $1\leq j\leq n$, then we have 
\begin{equation}\label{conjectEq1}
\int_{S} \zeta^\alpha\overline{\zeta}^\beta f(\zeta)d\sigma(\zeta)=0.
\end{equation}
(b) If $\alpha_j\leq \beta_j$ for all $1\leq j\leq n$ and we define $\beta-\alpha=(\beta_1-\alpha_1,\beta_2-\alpha_2,\ldots,\beta_n-\alpha_n)$, then
\begin{equation}\label{conjectEq2}
c_{\beta}^{-1}\int_{S}\zeta^\alpha\overline{\zeta}^{\beta}f(\zeta)d\sigma(\zeta)=
c_{\beta-\alpha}^{-1}\int_{S}\overline{\zeta}^{\beta-\alpha}f(\zeta)d\sigma(\zeta).
\end{equation}
\end{theorem}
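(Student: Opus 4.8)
The plan is to work through the "only if" direction first and then the "if" direction, with the orthogonality relations of Theorem~\ref{theorem-integral} doing the bookkeeping throughout. For the necessity of (a) and (b), I would start with $f\in A(S)$, so $f=F|_S$ for some $F$ continuous on $\overline B$ and holomorphic on $B$. Expanding $F$ in its homogeneous polynomial (equivalently, power series) expansion $F=\sum_{\gamma}a_\gamma z^\gamma$ and using dominated convergence to integrate term by term, the integral $\int_S \zeta^\alpha\overline\zeta^\beta f(\zeta)\,d\sigma(\zeta)$ becomes $\sum_\gamma a_\gamma \int_S \zeta^{\alpha+\gamma}\overline\zeta^\beta\,d\sigma$. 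By Theorem~\ref{theorem-integral} the only surviving term is the one with $\alpha+\gamma=\beta$; this forces $\gamma=\beta-\alpha$, which exists as a multi-index only when $\alpha_j\le\beta_j$ for all $j$. Hence if $\alpha_j>\beta_j$ for some $j$ the sum is empty and we get (a); otherwise the integral equals $a_{\beta-\alpha}\,c_\beta$. Applying the same computation with $\alpha$ replaced by the zero multi-index gives $\int_S \overline\zeta^{\beta-\alpha}f\,d\sigma = a_{\beta-\alpha}\,c_{\beta-\alpha}$, and eliminating $a_{\beta-\alpha}$ yields exactly~(b). To pass from $A(S)$ to all of $H^p(S)$, I would note that $f\mapsto \int_S \zeta^\alpha\overline\zeta^\beta f\,d\sigma$ is a bounded linear functional on $L^p(S,\sigma)$ (the weight $\zeta^\alpha\overline\zeta^\beta$ is bounded on $S$ and lies in every $L^q$, so H\"older applies), hence continuous; since (a) and (b) are closed conditions that hold on the dense subset $A(S)$, they persist on the closure $H^p(S)$.

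For the sufficiency direction, suppose $f\in L^p(S,\sigma)$ satisfies (a) and (b). The idea is to build the candidate holomorphic extension from the data in (b) and then invoke Theorem~\ref{theorem-1}(b). Concretely, for each multi-index $\omega$ set $a_\omega := c_\omega^{-1}\int_S \overline\zeta^\omega f(\zeta)\,d\sigma(\zeta)$ and consider the formal power series $F(z)=\sum_\omega a_\omega z^\omega$. Two things must be checked: that this series actually defines a function in $H^p(B)$, and that its boundary trace $F^*$ equals $f$. For the first, I expect the cleanest route is to compare $F$ with the Cauchy integral $C[f]$ (or the Poisson integral $P[f]$): using the series expansion of the Cauchy kernel $1/(1-\langle z,\zeta\rangle)^n$ and Theorem~\ref{theorem-integral}, one computes that the homogeneous expansion of $C[f]$ has exactly the coefficients $a_\omega$ defined above, so $F=C[f]$ as holomorphic functions on $B$. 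For the second, I would show $f\in H^p(S)$ directly: since $f\in L^p\subseteq L^1$, its Poisson integral $P[f]$ is well-defined and $P[f]^*=f$ a.e.\ (by the general boundary-limit theory for invariant Poisson integrals, Chapter~5 of \cite{Rudin1980}); conditions (a) and (b) are precisely what force $P[f]=C[f]$, i.e.\ they kill all the "mixed" and "anti-holomorphic" terms in the Poisson kernel expansion, leaving only the holomorphic part. Then $C[f]=P[f]$ is holomorphic, one checks it lies in $H^p(B)$ using $\|P[f]\|_{H^p}\le\|f\|_p$, and Theorem~\ref{theorem-1}(a) gives $C[f]^*=f\in H^p(S)$.

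The technical heart of the argument is the comparison between the Poisson and Cauchy kernels at the level of their homogeneous expansions on $S$: one needs the expansion of the invariant Poisson kernel into bihomogeneous pieces indexed by pairs $(\alpha,\beta)$, recognize that testing $P[f]$ against such a piece produces exactly the integrals $\int_S \zeta^\alpha\overline\zeta^\beta f\,d\sigma$ appearing in (a) and (b), and then see that (a) makes the pieces with $\alpha\not\le\beta$ vanish while (b) collapses each piece with $\alpha\le\beta$ onto a single holomorphic monomial $z^{\beta-\alpha}$ with the correct coefficient. Assembling these collapsed pieces must reproduce the Cauchy kernel expansion $\sum_\omega c_\omega^{-1}\overline\zeta^\omega z^\omega$; getting the combinatorial constants to match — tracking the $c_\omega$'s through the bihomogeneous decomposition — is where I expect the real bookkeeping to lie, although Theorem~\ref{theorem-integral} is exactly the tool that makes every such sum telescope to one term. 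The remaining steps (density/continuity in part one, membership in $H^p(B)$ and the appeal to Theorem~\ref{theorem-1} in part two) are routine once this identity is in hand.
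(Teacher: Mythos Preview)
Your proposal is correct and follows essentially the same route as the paper: for necessity you reduce to monomials via the orthogonality relations of Theorem~\ref{theorem-integral}, and for sufficiency you expand the Poisson kernel bihomogeneously, use (a) and (b) to collapse $P[f]$ to $C[f]$, and then invoke what the paper packages as Lemma~\ref{lemma-1}. One small caveat on the forward direction: for $f\in A(S)$ the power series of $F$ need not converge on $S$ itself, so ``dominated convergence to integrate term by term'' should really be carried out on the dilated sphere $rS$ followed by $r\to 1^-$ --- which is precisely the dilation argument the paper runs directly on all of $H^p(S)$, making your separate density step unnecessary.
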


We believe that the result of Theorem \ref{theorem-main} is new. Admittedly, Theorem \ref{theorem-main} is not as elegant as Theorem \ref{theorem-2}. One reason for this inelegance is due to the fact that when $n>1$, for $\zeta\in S$ and multi-index $\alpha$ the product $\zeta^\alpha\overline{\zeta}^\alpha=|\zeta_1|^{\alpha_1}\cdot\ldots\cdot|\zeta_n|^{\alpha_n}$ can be strictly less than 1 and thus not a constant (whereas when $n=1$ this product will always equal 1). That said, Theorem \ref{theorem-main} reduces to Theorem \ref{theorem-2} in the case that $n=1$. Indeed, if $\alpha_1>\beta_1$ and $\zeta_1\in U$, then $\zeta_1^{\alpha_1}\overline{\zeta_1}^{\beta_1} = \zeta_1^{\alpha_1-\beta_1}$ and so Equation \eqref{conjectEq1} is equivalent to the condition in Theorem \ref{theorem-2}. In the case that $\alpha_1\leq \beta_1$ and $\zeta_1\in U$, then $\zeta_1^{\alpha_1}\overline{\zeta_1}^{\beta_1}=\overline{\zeta_1}^{\beta_1-\alpha_1}$. Furthermore, as $n-1=0$ in this case, the coefficients $c_{\beta}$ and $c_{\beta-\alpha}$ both equal 1. Thus, Equation \eqref{conjectEq2} holds for any $f\in L^1(U,\sigma)$ (which is why there is not a second case present in Theorem \ref{theorem-2}). To be clear, Equation \eqref{conjectEq2} does not generally hold for every $f\in L^1(S,\sigma)$ when $n>1$. For a counterexample, let $n=2$ and consider $\alpha=\beta=(1,1)$ with the (non-holomorphic) function $f(\zeta_1,\zeta_2)=(\zeta_1\zeta_2)\overline{(\zeta_1\zeta_2)}$. Then by Theorem \ref{theorem-integral} the left-hand side of Equation \eqref{conjectEq2} equals $c_{(1,1)}^{-1}c_{(2,2)}=1/5$, 
whereas the right-hand side of Equation \eqref{conjectEq2} equals $c_{(0,0)}^{-1}c_{(1,1)}=1/6$.

The rest of the paper is organized as follows: in Section \ref{section2} we will rigorously define and state known properties of the Poisson and Cauchy integrals $P[f]$ and $C[f]$, respectively, that were previously mentioned in Theorem \ref{theorem-1} and will play a large role in the proof of Theorem \ref{theorem-main}, in Section \ref{section3} we will prove Theorem \ref{theorem-main}, and in Section \ref{section4} we make some concluding remarks. As many of the background theorems that we will cite come from \cite{Rudin1980}, we have and will continue to mostly adopt its notation to make it easier to look up results while reading this paper.

\section{The Poisson and Cauchy Integrals}\label{section2}

Let $z=(z_1,z_2,\ldots,z_n)$ and $w=(w_1,w_2,\ldots,w_n)$ be elements of $\mathbb{C}^n$. Define the inner product $\langle z,w\rangle$ and length $|z|$ as
\[\langle z,w\rangle =z_1\overline{w_1}+ z_2\overline{w_2}+\ldots+z_n\overline{w_n},\, |z| = \langle z,z\rangle^{1/2}.\]

Next let $z\in B$ and $\zeta\in S$. Define the (invariant) Poisson kernel $P(z,\zeta)$ and Poisson integral $P[f]$ for any $f\in L^1(S,\sigma)$ as
\[P(z,\zeta)=\frac{(1-|z|^2)^n}{|1-\langle z,\zeta\rangle|^{2n}},\, P[f](z)=\int_{S} P(z,\zeta) f(\zeta)d\sigma(\zeta).\]
Let $z,w\in \mathbb{C}^n$ with $\langle z,w\rangle \neq 1$. Define the Cauchy kernel $C(z,w)$ and Cauchy integral $C[f]$ for any $f\in L^1(S,\sigma)$ as
\[C(z,w)=\frac{1}{(1-\langle z, w\rangle)^n},\, C[f](z)=\int_{S} C(z,\zeta) f(\zeta)d\sigma(\zeta) \mbox{ for $z\in B$}.\]

The Poisson and Cauchy integrals have very nice properties. In particular, for any $f\in L^p(S,\sigma)$, $C[f]$ is holomorphic on $B$ and $P[f]$ is $\mathcal{M}$-harmonic. To the latter classification, a function $F$ on $B$ is called $\mathcal{M}$-harmonic if it is twice continuously differentiable on $B$ and $\tilde{\Delta}F=0$, where $\tilde{\Delta}$ a certain second-degree partial differential operator that is called the \emph{invariant Laplacian}. When $n=1$, $\mathcal{M}$-harmonic functions correspond to harmonic functions, but these classes of functions are different for $n>1$. Furthermore, any function that is holomorphic on $B$ is also $\mathcal{M}$-harmonic on $B$. For our purposes we do not need the definition of $\tilde{\Delta}$, but the definition and some fundamental results on $\mathcal{M}$-harmonic functions can be found in Chapter 4 of \cite{Rudin1980}. Since we are looking for holomorphic functions on $B$ you may wonder why we consider the Poisson integral at all. It is because it has nice convergence properties at the boundary $S$ as Theorem \ref{theorem-3} will show. To state this theorem, we need two definitions: If $u:B\to\mathbb{C}$, for any $0\leq r<1$ we define the function $u_r:S\to\mathbb{C}$ as
\[u_r(\zeta) = u(r\zeta).\]
%
\begin{theorem}\label{theorem-3}
Let $1\leq p <\infty$ and let $f\in L^p(S,\sigma)$.
\begin{enumerate}[(a)]
\item For any $0\leq r<1$, we have $\|P[f]_r\|_p\leq \|f\|_p$ and $\lim_{r\to 1^-}\|P[f]_r-f\|_p=0$. (Theorem 3.3.4 part (b) of \cite{Rudin1980})
\item For almost all $\zeta\in S$ (with respect to $\sigma$), the $K$-limit of $P[f](\zeta)$ exists and equals $f(\zeta)$ (more succinctly, $P[f]^*= f$). (Theorem 5.4.8 of \cite{Rudin1980})
\end{enumerate}
\end{theorem}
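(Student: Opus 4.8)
The plan is to treat the two assertions separately, since part~(a) is a soft functional-analytic statement about an approximate identity while part~(b) is a genuine almost-everywhere convergence (Fatou-type) theorem requiring maximal-function machinery. Everything rests on three properties of the invariant Poisson kernel that I would record first: it is positive, $P(z,\zeta)>0$ for $z\in B$, $\zeta\in S$; it has unit total mass, $\int_S P(z,\zeta)\,d\sigma(\zeta)=1$ for every $z\in B$, which follows from the factorization $P(z,\zeta)=(1-|z|^2)^n|C(z,\zeta)|^2$ together with the reproducing identity $\int_S|C(z,\zeta)|^2\,d\sigma(\zeta)=C(z,z)=(1-|z|^2)^{-n}$, itself obtained by expanding the Cauchy kernel and applying Theorem~\ref{theorem-integral}; and it is symmetric on dilated spheres, $P(r\zeta,\eta)=P(r\eta,\zeta)$, which holds because $\overline{1-r\langle\zeta,\eta\rangle}=1-r\langle\eta,\zeta\rangle$ leaves the modulus unchanged. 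The symmetry upgrades the unit-mass property to $\int_S P(r\zeta,\eta)\,d\sigma(\zeta)=1$, now integrating in the \emph{first} variable.

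For part~(a) I would first get the norm bound. Since $P(r\zeta,\cdot)\,d\sigma$ is a probability measure, Jensen's inequality (equivalently Hölder against this measure) gives
\[
|P[f](r\zeta)|^p\le \int_S P(r\zeta,\eta)\,|f(\eta)|^p\,d\sigma(\eta),
\]
and integrating in $\zeta$, applying Fubini, and invoking the first-variable unit-mass identity yields $\|P[f]_r\|_p^p\le\|f\|_p^p$. For the convergence I would run a standard approximate-identity argument: establish $\|P[g]_r-g\|_p\to 0$ for $g$ in the dense subspace $C(S)\subset L^p(S,\sigma)$, then transfer to general $f$ by a three-$\varepsilon$ estimate using the uniform bound just proved. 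For continuous $g$ I would show $P[g]_r\to g$ uniformly by writing $P[g](r\zeta)-g(\zeta)=\int_S P(r\zeta,\eta)(g(\eta)-g(\zeta))\,d\sigma(\eta)$, splitting the integral at $\{|1-\langle\zeta,\eta\rangle|<\delta\}$, controlling the near part by uniform continuity of $g$ on the compact set $S$ and the far part by the concentration estimate $|1-r\langle\zeta,\eta\rangle|\ge\delta-(1-r)$, which forces $\sup_\zeta\int_{\{|1-\langle\zeta,\eta\rangle|\ge\delta\}}P(r\zeta,\eta)\,d\sigma(\eta)\to 0$ as $r\to 1^-$. Uniform convergence on the finite-measure space $S$ then gives $L^p$ convergence, completing~(a).

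Part~(b) is the substantive step, a Fatou theorem for admissible approach, and my plan follows the classical maximal-function route. First I would introduce the admissible maximal function $M_a f(\zeta)=\sup\{|P[f](z)|:z\in D_\alpha(\zeta)\}$ over Kor\'anyi approach regions $D_\alpha(\zeta)$, and the Hardy--Littlewood maximal function $Mf$ built from the non-isotropic balls $Q(\zeta,\delta)=\{\eta\in S:|1-\langle\zeta,\eta\rangle|<\delta\}$. The two technical inputs are: a pointwise domination $M_a f\lesssim Mf$, obtained by decomposing the Poisson kernel into dyadic annuli in the quantity $|1-\langle\zeta,\eta\rangle|$ and bounding each piece by an average of $|f|$ over a $Q$-ball; and a weak-type $(1,1)$ bound for $M$, obtained from a Vitali covering lemma for the balls $Q$ together with the doubling property $\sigma(Q(\zeta,2\delta))\lesssim\sigma(Q(\zeta,\delta))$. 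Granting these, the usual argument closes the proof: for continuous $g$ the $K$-limit of $P[g]$ equals $g$ everywhere (by the same concentration estimate from part~(a), now applied along admissible regions), and for general $f\in L^p$ one writes $f=g+(f-g)$ with $g$ continuous and $\|f-g\|_p$ small, so the oscillation of the $K$-limit of $P[f]$ is dominated by $2M_a(f-g)+|f-g|$; the weak-type bound makes the measure of the exceptional set arbitrarily small, yielding $P[f]^*=f$ almost everywhere.

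The main obstacle is unquestionably the weak-type maximal inequality in part~(b), and within it the interplay between the non-isotropic (Kor\'anyi) geometry and the surface measure $\sigma$. Unlike the isotropic Euclidean case, the balls $Q(\zeta,\delta)$ are anisotropic---stretched tangentially and squeezed in the complex-normal direction---so the covering lemma and the doubling estimate must be carried out in the metric $d(\zeta,\eta)=|1-\langle\zeta,\eta\rangle|^{1/2}$, and the pointwise bound $M_a f\lesssim Mf$ must correctly match the shape of the admissible regions $D_\alpha(\zeta)$ to these balls. Verifying the doubling constant and the dyadic kernel decomposition is precisely what makes $K$-limits (rather than merely nontangential limits) the natural notion of boundary approach when $n>1$; once this homogeneous-space harmonic analysis is in place, the remaining steps are routine.
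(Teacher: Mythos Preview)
The paper does not actually prove Theorem~\ref{theorem-3}; it simply states the two assertions and defers to \cite{Rudin1980} (Theorems~3.3.4(b) and~5.4.8) for the proofs. Your outline is correct and is precisely the route taken in that reference: part~(a) via the approximate-identity argument from positivity, unit mass, and density of $C(S)$, and part~(b) via the Kor\'anyi--Fatou theorem obtained from a weak-type $(1,1)$ bound for the nonisotropic Hardy--Littlewood maximal function on $(S,d,\sigma)$ with $d(\zeta,\eta)=|1-\langle\zeta,\eta\rangle|^{1/2}$, so there is nothing to compare beyond noting that your sketch fills in exactly what the paper chose to cite rather than reproduce.
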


Thus $P[f]$ converges to $f$ on $S$ in both an $L^p$-sense and in terms of $K$-limits. Note that Theorem \ref{theorem-3} implies that, loosely speaking, \textit{all} functions in $L^p(S,\sigma)$ are boundary traces of a $\mathcal{M}$-harmonic functions on $B$ (that is, $\mathcal{M}$-harmonic are special but not \textit{special}). In contrast, the Cauchy integral $C[f]$ is not as well-behaved; while $C[f]$ does have $K$-limits at almost all points of $S$ (see the corollary to Theorem 6.2.3 in \cite{Rudin1980}), these $K$-limits need not equal the original function $f$. Indeed, for $1<p<\infty$, the mapping $f\mapsto C[f]^*$ is a linear projection from $L^p(S,\sigma)$ onto $H^p(S)$ (see the corollary to Theorem 6.3.1 in \cite{Rudin1980}).

If the condition $P[f]=C[f]$ is satisfied and we let $F$ equal both, then we have the best of both worlds. Indeed, in this case $F$ converges to $f$ on $S$ in terms of $K$-limits (by part (b) of Theorem \ref{theorem-3} since $F=P[f]$), $F$ is holomorphic (since $F=C[f]$), and $F\in H^p(B)$ (by part (a) of Theorem \ref{theorem-3} since $F=P[f]$). By part (a) of Theorem \ref{theorem-1}, we see that $f=F^*$ must lie in $H^p(S)$. That is, we have proven the following lemma:

\begin{lemma}\label{lemma-1}
Let $1\leq p <\infty$. If $f\in L^p(S,\sigma)$ and $P[f]=C[f]$, then $f\in H^p(S)$.
\end{lemma}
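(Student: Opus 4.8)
The plan is to exploit the hypothesis $P[f]=C[f]$ to produce a single function $F$ on $B$ that is simultaneously holomorphic, belongs to $H^p(B)$, and has $K$-limit $f$ almost everywhere on $S$; once such an $F$ is in hand, part (a) of Theorem \ref{theorem-1} delivers $f=F^*\in H^p(S)$ immediately.

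Concretely, I would set $F:=P[f]=C[f]$ and verify three properties in turn, each using one of the two representations of $F$. First, $F$ is holomorphic on $B$: this is precisely the fact, recorded above, that $C[f]$ is holomorphic on $B$ for every $f\in L^p(S,\sigma)$, applied to $F=C[f]$. Second, $F\in H^p(B)$: here I would switch to the representation $F=P[f]$ and invoke Theorem \ref{theorem-3}(a), which gives $\|F_r\|_p=\|P[f]_r\|_p\le\|f\|_p$ for every $0\le r<1$, whence $\sup_{0\le r<1}\|F_r\|_p\le\|f\|_p<\infty$, the defining condition for membership in $H^p(B)$. Third, still using $F=P[f]$, Theorem \ref{theorem-3}(b) yields that the $K$-limit $F^*$ exists $\sigma$-almost everywhere on $S$ and equals $f$.

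With $F\in H^p(B)$ established, Theorem \ref{theorem-1}(a) applies and gives $F^*\in H^p(S)$; combining this with $F^*=f$ from the previous step yields $f\in H^p(S)$, as claimed.

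As for difficulty, there is essentially no obstacle: the lemma is a bookkeeping assembly of Theorems \ref{theorem-1} and \ref{theorem-3}, and the only genuine content is the observation that the identity $P[f]=C[f]$ is exactly what lets one function inherit the holomorphy and $H^p(B)$-membership of the Cauchy integral and the favorable boundary convergence of the Poisson integral at the same time. The one point I would be careful to state explicitly is that all three properties are attributes of the \emph{same} function $F$, so that Theorem \ref{theorem-1}(a) can be invoked without ambiguity to identify $f$ as the boundary trace $F^*$.
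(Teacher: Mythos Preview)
Your proposal is correct and follows essentially the same approach as the paper: set $F=P[f]=C[f]$, use $F=C[f]$ to get holomorphy, use $F=P[f]$ together with Theorem~\ref{theorem-3}(a) to place $F$ in $H^p(B)$, use Theorem~\ref{theorem-3}(b) to get $F^*=f$, and conclude via Theorem~\ref{theorem-1}(a) that $f=F^*\in H^p(S)$.
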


Lemma \ref{lemma-1} will be central to our proof of Theorem \ref{theorem-main} which we are now in a position to prove.    

\section{Proof of Theorem \ref{theorem-main}}\label{section3}

\begin{proof}[Proof of Theorem \ref{theorem-main}]
First we prove the forward direction. Suppose $f\in H^p(S)$ and let $F=P[f]$. By part (b) of Theorem \ref{theorem-1}, $F\in H^p(B)$ and by part (a) of Theorem \ref{theorem-3}, we have $\int_{S} |f(\zeta)-F(r\zeta)|^p d\sigma(\zeta)\to 0$ as $r\to 1^-$. Since $F$ is holomorphic on $B$, we can write it as a power series of the form $\sum_{\gamma} c_\gamma z^\gamma$ where the series is taken over all multi-indices $\gamma$ and converges to $F$ uniformly on compact subsets of $B$ (see Remark (i) in Subsection 1.2.6 in \cite{Rudin1980}). Let $\alpha$ and $\beta$ be a pair of multi-indices. First suppose that $\alpha_j>\beta_j$ for some $1\leq j\leq n$. Then for any other multi-index $\gamma$ we have $\alpha_j+\gamma_j>\beta_j$. Thus, $\alpha+\gamma\neq \beta$ and so by Theorem \ref{theorem-integral} we have
\[\int_{S} \zeta^{\alpha+\gamma}\overline{\zeta}^\beta d\sigma(\zeta)=0.\]
Let $0<r<1$. By the uniform convergence of $F(z)=\sum_{\gamma} c_\gamma z^\gamma$ on the sphere of radius $r$, the above implies that 
\[\int_{S} \zeta^{\alpha}\overline{\zeta}^\beta F(r\zeta)d\sigma(\zeta)=0.\]
Thus, using the fact that $|\zeta^{\alpha}\overline{\zeta}^\beta|\leq 1$, we have
\begin{eqnarray*}
\left|\int_{S} \zeta^{\alpha}\overline{\zeta}^\beta f(\zeta)d\sigma(\zeta)\right| &=& \left|\int_{S} \zeta^{\alpha}\overline{\zeta}^\beta(f(\zeta)-F(r\zeta))d\sigma(\zeta)\right|\\
&\leq & \int_{S}|f(\zeta)-F(r\zeta)|d\sigma(\zeta)\leq \left(\int_{S}|f(\zeta)-F(r\zeta)|^pd\sigma(\zeta)\right)^{1/p}.
\end{eqnarray*}
As $r\to 1^-$, the right-hand expression goes to $0$ and so $\int_{S} \zeta^{\alpha}\overline{\zeta}^\beta f(\zeta)d\sigma(\zeta)=0$, proving Equation \eqref{conjectEq1}. Now suppose $\alpha_j\leq\beta_j$ for all $1\leq j\leq n$ and define the multi-index $\lambda=\beta-\alpha$. Note that for any multi-index $\gamma$, again by Theorem \ref{theorem-integral} we have
\[c_{\beta}^{-1}\int_{S}\zeta^\alpha\overline{\zeta}^{\beta}\zeta^\gamma d\sigma(\zeta)=\left\{\begin{array}{lr}0 & \mbox{if $\gamma\neq\lambda$}\\1 & \mbox{if $\gamma=\lambda$}\end{array}\right\}=
c_{\lambda}^{-1}\int_{S}\overline{\zeta}^{\lambda}\zeta^\gamma d\sigma(\zeta).\]
Again by the uniform convergence of $F(z)=\sum_{\gamma} c_\gamma z^\gamma$ on the sphere of radius $r$, the above implies that 
\begin{equation*}
c_{\beta}^{-1}\int_{S}\zeta^\alpha\overline{\zeta}^{\beta}F(r\zeta)d\sigma(\zeta)=
c_{\lambda}^{-1}\int_{S}\overline{\zeta}^{\lambda}F(r\zeta)d\sigma(\zeta).
\end{equation*}
Using an analogous argument as we used in to prove Equation \eqref{conjectEq1}, the above implies that Equation \eqref{conjectEq2}, completing the forward direction.

For the backward direction, suppose $f\in L^p(S,\sigma)$ and satisfies Equations \eqref{conjectEq1} and \eqref{conjectEq2}. We will repeatedly cite theorems that are standard in a graduate level course in real analysis to swap a sum through an integral (these theorems can be found in \cite{greenRudin}, for example). By Lemma \ref{lemma-1}, to show that $f\in H^p(S)$ it suffices to show that $P[f](z)=C[f](z)$ for all $z\in B$. First we will show that for any $z,w\in \overline{B}$ where at least one of $z$ or $w$ lies in $B$ we have
\begin{equation}\label{CauchyKernelSum}
C(z,w) = \frac{1}{(1-\langle z,w\rangle)^n} = \sum_{\omega}c_{\omega}^{-1} z^\omega\overline{w}^\omega,
\end{equation}
where the sum is taken over all multi-indices $\omega$. First note that $|z||w|<1$. Using the power series $1/(1-x)^n=\sum_{j=0}^\infty \begin{pmatrix}j+n-1\\ n-1 \end{pmatrix} x^j$ and the multinomial theorem, we have 
\begin{eqnarray*}
C(z,w) &=& \frac{1}{(1-\langle z,w\rangle)^n} = \sum_{j=0}^\infty \begin{pmatrix}j+n-1\\ n-1 \end{pmatrix}(\langle z, w\rangle)^j
= \sum_{j=0}^\infty \begin{pmatrix}j+n-1\\ n-1 \end{pmatrix} \sum_{|\omega|=j} \begin{pmatrix}j \\ \omega_1, \omega_2,\ldots, \omega_n \end{pmatrix} z^\omega \overline{w}^\omega\\ 
&=& \sum_{\omega} \begin{pmatrix}|\omega|+n-1\\ n-1 \end{pmatrix}\begin{pmatrix}|\omega| \\ \omega_1, \omega_2,\ldots, \omega_n \end{pmatrix} z^\omega \overline{w}^\omega= \sum_{\omega} \frac{(|\omega|+n-1)!}{(n-1)!\omega !} z^\omega \overline{\zeta}^\omega = \sum_{\omega}c_{\omega}^{-1} z^\omega\overline{w}^\omega.
\end{eqnarray*}
The rearranging of the sum above is justified as it absolutely converges. Indeed, using the Cauchy-Schwarz inequality we have
\begin{eqnarray}
\lefteqn{\sum_{j=0}^\infty \begin{pmatrix}j+n-1\\ n-1 \end{pmatrix} \sum_{|\omega|=j} \begin{pmatrix}j \\ \omega_1, \omega_2,\ldots, \omega_n \end{pmatrix} |z^\omega \overline{w}^\omega|}\nonumber\\
&=& \sum_{j=0}^\infty \begin{pmatrix}j+n-1\\ n-1 \end{pmatrix}\left(\sum_{k=1}^n |z_k||w_k|\right)^j  \leq \sum_{j=0}^\infty \begin{pmatrix}j+n-1\\ n-1 \end{pmatrix}(|z||w|)^j= \frac{1}{(1-|z||w|)^n},\label{absoluteConEq1}
\end{eqnarray}
as $|z||w|<1$. Thus, again using absolute convergence, we have for $z\in B$ and $\zeta\in S$
\begin{equation}\label{PoissonKernelSum}
P(z,\zeta)=\frac{C(z,\zeta)C(\zeta,z)}{C(z,z)}=C(z,z)^{-1}\sum_{\upsilon}\sum_{\omega}c_{\omega}^{-1}c_{\upsilon}^{-1}z^\omega\overline{\zeta}^\omega \zeta^\upsilon\overline{z}^\upsilon,
\end{equation}
where each sum is taken over all multi-indices $\omega$ and $\upsilon$. Furthermore, using a similar computation as in Inequality \eqref {absoluteConEq1}, and using the assumption that $f\in L^p(S,\sigma)\subseteq L^1(S,\sigma)$ we have
\begin{equation*}
\int_{S} \sum_{\upsilon}\sum_{\omega}c_{\omega}^{-1}c_{\upsilon}^{-1}\left|z^\omega\overline{\zeta}^\omega \zeta^\upsilon\overline{z}^\upsilon f(\zeta)\right|d\sigma(\zeta)\leq \int_{S} \frac{1}{(1-|z||\zeta|)^{2n}}|f(\zeta)|d\sigma(\zeta)\leq \frac{1}{(1-|z|)^{2n}} \int_{S}|f(\zeta)|d\sigma(\zeta)  <\infty.
\end{equation*}
Thus, by Equation \eqref{PoissonKernelSum} and Fubini's Theorem we have
\begin{equation}\label{conjectEq3}
P[f](z) = \int_{S} P(z,\zeta)f(\zeta)d\sigma(\zeta) = C(z,z)^{-1}\sum_{\upsilon}\sum_{\omega}c_{\omega}^{-1}c_{\upsilon}^{-1}z^\omega\overline{z}^\upsilon\int_{S}\zeta^\upsilon\overline{\zeta}^\omega  f(\zeta)d\sigma(\zeta).
\end{equation}
By Equation \eqref{conjectEq1}, the integrals of $\int_{S}\zeta^\upsilon\overline{\zeta}^\omega  f(\zeta)d\sigma(\zeta)$ vanish except when $\upsilon_j\leq \omega_j$ for each $1\leq j\leq k$. That is, the integrals vanish except when $\omega=\upsilon+\lambda$ for some multi-index $\lambda$. Thus, Equation \eqref{conjectEq3} becomes
\begin{eqnarray*}
P[f](z) 
&=& C(z,z)^{-1}\sum_{\upsilon} \sum_{\lambda}c_{\upsilon}^{-1}z^{\upsilon+\lambda}\overline{z}^\upsilon \left(c_{\upsilon+\lambda}^{-1}\int_{S}\zeta^\upsilon\overline{\zeta}^{\upsilon+\lambda}  f(\zeta)d\sigma(\zeta)\right).
\end{eqnarray*}
Using Equation \eqref{conjectEq2} in the summation above yields
\begin{eqnarray*}
P[f](z) &=& C(z,z)^{-1}\sum_{\upsilon}\sum_{\lambda}c_{\upsilon}^{-1}z^{\upsilon+\lambda}\overline{z}^\upsilon \left(c_{\lambda}^{-1}\int_{S}\overline{\zeta}^{\lambda}  f(\zeta)d\sigma(\zeta)\right)\\
&=& C(z,z)^{-1}\left(\sum_{\upsilon}c_{\upsilon}^{-1}z^{\upsilon}\overline{z}^\upsilon\right)\left(\int_{S}\sum_{\lambda}c_{\lambda}^{-1}z^{\lambda} \overline{\zeta}^{\lambda} f(\zeta)d\sigma(\zeta) \right)\\
&=& C(z,z)^{-1}C(z,z)\int_{S} C(z,\zeta)f(\zeta)d\sigma(\zeta) = C[f](z),
\end{eqnarray*}
where we can use the Dominated Convergence Theorem with dominating function $\frac{1}{(1-|z||\zeta|)^n}|f(\zeta)|$ to justify swapping the sum and the integral in the second to last line, and used Equation \eqref{CauchyKernelSum} in the last line, completing the proof.
\end{proof}
\section{Closing Remarks: Can Theorem \ref{theorem-main} Be Generalized?}\label{section4}
In the case of one complex variable, Hardy spaces can be defined on a variety of different subsets of $\mathbb{C}$. In particular, in the case that $\Omega\subseteq \mathbb{C}$ is a simply-connected open set with some minimal smoothness assumptions on its boundary $\partial\Omega$, then the Hardy spaces $H^p(\Omega)$ and $H^p(\partial \Omega)$ have several equivalent definitions (see \cite{Lanzani2000}). In this general setting, the statement of Theorem \ref{theorem-2} still holds with $\partial\Omega$ replacing $U$ (see Theorem 10.4 in \cite{Duren1970}). So while one might hope that Theorem \ref{theorem-main} could carry over to Hardy spaces of a wide class of open subsets of $\mathbb{C}^n$, there are potential difficulties in proving it. For example, the proof of the generalization of Theorem \ref{theorem-2} in \cite{Duren1970} uses the Riemann Mapping Theorem and the Riemann Mapping Theorem does not hold in $\mathbb{C}^n$ if $n>1$ (see the Corollary to Theorem 2.2.4 in \cite{Rudin1980}), removing one possible strategy for proving a generalization of Theorem \ref{theorem-main}. Also, the technical machinery needed for the Hardy space setting in $\mathbb{C}^n$ for $n>1$ gets pretty gnarly pretty quickly (see \cite{LanzaniStein2015}). So even if Theorem \ref{theorem-main} is true in a more general setting, it will likely not a short proof like we have presented here. In any case, the main point of this note is that boundary traces of holomorphic functions form a very \textit{special} club, a fact that is almost certainly true for more exotic domains in $\mathbb{C}^n$ than just the ball $B$.\\

\noindent\textbf{Acknowledgement}: I thank the American Institute of Mathematics where this work was initiated. I also thank my colleague Linda McGuire for giving me advice on an early draft of this manuscript, and my colleagues Loredana Lanzani and Yuan Zhang for our many fruitful conversations on the wonderful world of Hardy spaces.

\end{document}